\newtheorem{theorem}{Theorem}
\theoremstyle{remark}
\newtheorem*{remark}{Remark}
\title{Heptagon relations parameterized by simplicial 3-cocycles}
\author{Igor G. Korepanov}
\date{March 2021}
\begin{document}

\sloppy

\maketitle

\begin{abstract}
We construct a family of heptagon relations---algebraic imitations of five-dimen\-sional Pachner move 3--4, parameterized by simplicial 3-cocycles.
\end{abstract}

\section{Introduction}\label{s:i}

\subsection{Five-dimen\-sional Pachner moves and heptagon relations: generalities}\label{ss:g}

A piecewise linear (PL) manifold triangulation can be transformed into any other triangulation by means of a sequence of \emph{Pachner moves}~\cite{Pachner,Lickorish}. Algebraic imitations of such moves are often called \emph{polygon relations}~\cite{DM-H}; they can be used in constructing discrete topological field theories which can further be applied to both topological and mathematical physical problems.

In particular, in five dimensions there are six types of Pachner moves: 1--6, 2--5, 3--4 and their inverses. Notation ``$m$--$n$'' means that we take a cluster of $m$ simplices of the highest dimension---five in this case, which are thought of as making part of a triangulation, and replace it with a cluster of $n$ simplices; this is feasible because the two clusters have the same boundary. We call the initial cluster of $m$ simplices the \emph{left-hand side} (l.h.s.) of the Pachner move, while its resulting cluster of $n$ simplices---its \emph{right-hand side} (r.h.s.).

A 5-simplex has six faces---4-cells; suppose we have a set~$X$, called set of \emph{colors}, with which we can color these faces. All possible colorings of a 5-simplex---we denote it $\Delta^5$ or often simply~$v$---belong thus to~$X^{\times 6}$---the sixth Cartesian degree of~$X$; but we postulate that not all colorings are \emph{permitted}: there is a subset $R_v \subset X^{\times 6}$ of permitted colorings, depending moreover on~$v$.

When we glue simplices together to obtain a cluster~$M$, the condition that each simplex must be colored permittedly results in the fact that not all colorings may appear on its boundary~$\partial M$, but only some subset $R_{\partial M} \subset X^{\times N_4}$, where $N_4$ is the number of 4-cells in~$\partial M$. In this paper, \emph{hexagon relation} means, by definition, that the two subsets~$R_{\partial M}$ coincide for the l.h.s.\ and r.h.s.\ of a Pachner move; as we already indicated, the boundary is the same for both.

Specifically, we study in this paper Pachner move 3--4, believed to be ``central'' or at least good to start with. If we describe simplices in terms of their vertices, and call these latter $1,\ldots,7$, then the l.h.s.\ of this move consists of 5-simplices 123456, 123457 and 123467, while the r.h.s.---of simplices 123567, 124567, 134567 and 234567. Below, in Theorems \ref{th:3,9,18} and~\ref{th:9,18}, we will need the numbers $N_0=7$ of vertices, $N_1=21$ of edges, $N_2=34$ of triangles and $N_4=12$ of 4-simplices in their common boundary; it is an easy exercise to find these numbers: for instance, $N_2=34$ because, of $\binom{7}{3}=35$ triangles with vertices $1,\ldots,7$, only triangle~$567$ does not belong to the boundary.

\subsection{Specific features of higher polygon relations: linearity, parameterization, cohomology}

\paragraph{Linearity.}
One feature of known higher polygon relations: hexagon, heptagon, etc., is their \emph{linearity}. This means that the set of colors is a \emph{field}: $X=F$, or sometimes a ring, and the conditions specifying permitted colorings for a simplex (4-simplex in the hexagon case, 5-simplex for heptagon, etc.) are linear. Field~$F$ can be finite of course, but may be required not to be very small, so that some ``general position'' requirements can be realized within it.

\paragraph{Parameterization.}
Earlier, two different types of heptagon relations were found, see \cite{heptagon} and~\cite{DK}. There is, however, one important thing they have in common: both are parameterized by elements of a Grassmannian, and Grassmannians are built from quantities living in the seven \emph{vertices} (called ``$1,\ldots,7$'' in our Subsection~\ref{ss:g}).

At the same time, \emph{hexagon} relations are known to be naturally related to simplicial 2-cocycles~\cite{bosonic,nonconstant}, that is, quantities living on simplices of dimension~$>\nobreak 0$. It makes sense thus to search for higher polygon solutions related to higher cocycles. And this is what we do in the present paper: construct \emph{heptagon} relations parameterized by simplicial 3-cocycles.

\paragraph{Cohomology.}
Hexagon relations are known to have nontrivial cohomology---see, for instance,~\cite{nonconstant} and references therein. The same is expected from other higher polygon relations.

\subsection{Contents of the rest of this paper}

In Section~\ref{s:v}, we describe what we call ``V-colorings'' of a simplicial complex. They form a linear space spanned by ``triangle vectors'', where a triangle vector means a coloring with nonzero components only on the faces containing a given triangle.

Dually, in Section~\ref{s:f}, we describe what we call ``F-colorings''. They form a linear space \emph{annihilated} by ``triangle functionals'', these latter being linear forms depending only on colors on the faces containing a given triangle. We show that the linear space of F-colorings for one 5-simplex \emph{coincides} with the space of V-colorings.

In Section~\ref{s:h}, we introduce the linear space of ``permitted'' colorings. For one 5-simplex, permitted colorings coincide, by definition, with V- and F-colorings. For a bigger triangulation, each 5-simplex must be colored permittedly (as we already mentioned in Subsection~\ref{ss:g}). We analyze what happens for the l.h.s.\ and r.h.s.\ of Pachner move 3--4, and derive the heptagon relation.

\section{Triangle vectors and V-colorings}\label{s:v}

\subsection{Definitions}\label{ss:vd}

We consider simplicial complexes~$K$ with \emph{colored 4-cells}. In this paper, this will mean that each 4-cell---\emph{pentachoron}, typically denoted below as~$u$ or~$\Delta^4$---is assigned a three-component color---element of~$F^3$, where $F$ is a fixed field. By definition, the color changes its sign when pentachoron changes its orientation. A \emph{triangle vector}~$\mathsf e_s$ for a triangle $s=ijk$ is such a coloring that can have nonzero components only on faces $u\supset s$. By definition, $\mathsf e_s$ changes its sign when $s$ changes its orientation:
\begin{equation}\label{vo}
\mathsf e_{ijk} = -\mathsf e_{jik} = \mathsf e_{jki}\,, \quad \text{etc.}
\end{equation}

We define \emph{V-colorings} of~$K$ as elements of the linear space spanned by a given set of triangle vectors for each~$s$.

\begin{remark}
``V'' in ``V-colorings'' stands for triangle \textbf{v}ectors, to distinguish these colorings from ``F-colorings'' that will be introduced in Section~\ref{s:f} and will be determined by ``triangle \textbf{f}unctionals''.
\end{remark}

Below, we consider only such sets of triangle vectors that have the following properties:
\begin{itemize}
 \item[$\mathrm{(i)}$] \emph{vanishing boundary}: for any given oriented edge~$b$, 
  \begin{equation}\label{ve}
   \sum_{s\supset b} \epsilon _{s,b} \mathsf e_s =0,
  \end{equation}
  where $\epsilon _{s,b}$ is the sign with which $b$ enters in the boundary~$\partial s$ of~$s$,
 \item[$\mathrm{(ii)}$] \emph{orthogonality to a given 2-cochain}:
  \begin{equation}\label{vw}
   \sum_{\mathrm{all\;}s} \beta _s \mathsf e_s =0,
  \end{equation}
   where $\{\beta _s\}$ are generic coefficients for all triangles~$s$; for reasons that will become clear soon we will treat them together as a simplicial 2-\emph{cochain}~$\beta$,
 \item[$\mathrm{(iii)}$] \emph{nondegeneracy}: for any pentachoron~$u$, its colors---the restrictions~$\mathsf e_s|_u$ of triangle vectors on~$u$---span the whole~$F^3$.
\end{itemize}

\subsection{Existence and uniqueness up to gauge automorphisms}

\begin{theorem}\label{th:euv}
For a given~$K$, there exists a set of triangle vectors~$\mathsf e_s$ satisfying the above conditions $\mathrm{(i)}$--$\mathrm{(iii)}$. Moreover, all such~$\mathsf e_s$ are determined uniquely up to automorphisms of the (three-dimen\-sional) color spaces on each separate 4-face.
\end{theorem}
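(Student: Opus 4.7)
The plan is to reduce the problem from $K$ to a single 4-simplex and then solve it via the description of $F^3$-valued 2-cycles on $\Delta^4$. First, conditions (i)--(iii) all decouple pentachoron by pentachoron: since $\mathsf e_s$ is supported only on the pentachora $u\supset s$, the equation $\sum_{s\supset b}\epsilon_{s,b}\mathsf e_s=0$ in the ambient $F^{3N_4}$ is equivalent to its restriction to the color space of each individual pentachoron $u\supset b$; the same holds for (ii) and (iii), and the gauge $\mathrm{GL}(F^3)$ acts independently on each $u$. Thus the theorem reduces to proving existence and uniqueness up to $\mathrm{GL}_3(F)$ on one 4-simplex.

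On a fixed $u$, condition (i) says the $F^3$-valued 2-chain $\eta_u=\sum_{s\subset u}\mathsf e_s|_u\cdot s$ is a 2-cycle, and since $H_2(\Delta^4;F^3)=0$ it must be a 2-boundary: $\eta_u=\partial_3\bigl(\sum_t c_t\cdot t\bigr)$ with $c_t\in F^3$, i.e.\ $\mathsf e_s|_u=\sum_{t\supset s}\epsilon_{t,s}c_t$, unique modulo the residual gauge $c_t\mapsto c_t+\lambda\varepsilon_t$ ($\lambda\in F^3$) coming from $\ker\partial_3=\langle\partial u\rangle\otimes F^3$ --- here $\varepsilon_t$ is the incidence sign of $t$ in $\partial u$. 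In these coordinates, (ii) on $u$ rewrites as $\sum_t(\delta\beta)_t\,c_t=0$ in $F^3$, self-consistent with the $\lambda$-gauge because $\sum_t\varepsilon_t(\delta\beta)_t=(\delta\beta)(\partial u)=\beta(\partial\partial u)=0$. Viewing $C=(c_t)\in F^{3\times 5}$ and $(\delta\beta)|_u\in F^5$ (nonzero for generic $\beta$), the local problem is: find a rank-$3$ matrix $C$ with $C\,(\delta\beta)|_u=0$, unique modulo the combined gauge $C\mapsto AC+\lambda\varepsilon^T$, $(A,\lambda)\in\mathrm{GL}_3(F)\times F^3$.

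I would handle existence and uniqueness by explicit normalization. For existence, pick any 2-dimensional subspace $V\supset\langle(\delta\beta)|_u\rangle$ of $F^5$ and let $C$ be the quotient map $F^5\twoheadrightarrow F^5/V\cong F^3$; then $\mathrm{rank}\,C=3$ and $C(\delta\beta)|_u=0$, and a short combinatorial check on $\Delta^4$ (using that any two tetrahedra share a triangle, and $\mathrm{char}\,F\ne 2$) shows the $\mathsf e_s|_u$'s span the same subspace as the $c_t$'s, giving (iii). For uniqueness, use the $\lambda$-gauge to annihilate one chosen column of $C$, then $\mathrm{GL}_3(F)$ to send three linearly independent remaining columns to the standard basis; the last column is then forced by $C\,(\delta\beta)|_u=0$ as soon as the corresponding $(\delta\beta)_t$ is nonzero. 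The gauge parameters $3+9=12$ exactly match $\dim\{C:C(\delta\beta)|_u=0\}=15-3=12$, leaving zero moduli.

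The main obstacle is ensuring this normalization goes through for the given ``generic'' $\beta$: enough $c_t$'s must be linearly independent and enough $(\delta\beta)_t$'s nonzero for the canonical form to be reachable and the rank-$3$ property preserved. I expect this to follow from the standard genericity condition on the 2-cochain together with the dimension bookkeeping above, with no deeper obstruction.
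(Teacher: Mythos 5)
Your reduction to a single pentachoron is exactly the paper's first move, but your local argument on one $\Delta^4$ is genuinely different. The paper simply counts constraints: the $10$ edge relations~\eqref{ve}, minus their $5$ dependencies associated with vertices, plus the $1$ dependency among those and the single relation~\eqref{vw}, give $10-5+1+1=7$ independent linear relations on the $10$ restrictions $\mathsf e_s|_u\in F^3$, so the map $F^{10}\to F^3$, $s\mapsto\mathsf e_s|_u$, must be the quotient by a fixed $7$-dimensional subspace and is therefore unique up to $\mathrm{GL}_3(F)$. You instead integrate the cycle condition using $H_2(\Delta^4;F^3)=0$, parameterize by tetrahedron vectors $c_t$, and reduce to $C\,\omega|_u=0$ with $\operatorname{rank}C=3$ modulo $\mathrm{GL}_3(F)\times F^3$. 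Your route is longer but more informative: it makes explicit where the coboundary $\omega=\delta\beta$ on tetrahedra enters (anticipating Theorem~\ref{th:b'}) and which genericity conditions (which $\omega_t\neq0$) are actually used, whereas the paper's count leaves the exact independence of its $7$ relations unverified.

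One concrete repair is needed in your existence step. Since $\omega=\delta\beta$, one has $\sum_t\varepsilon_t\,\omega_t=(\delta\omega)_u=0$, so $\omega|_u$ always lies in the hyperplane $H=\bigl\{a\in F^5:\sum_t\varepsilon_t a_t=0\bigr\}$, which (being $B^3(\Delta^4)=Z^3(\Delta^4)$) is precisely the span of the vectors $\sum_{t\supset s}\epsilon_{t,s}e_t$ whose images under $C$ are the $\mathsf e_s|_u$. Hence ``pick any $2$-dimensional $V\supset\langle\omega|_u\rangle$'' does not work: if $V\subseteq H$, the $\mathsf e_s|_u$ span only the $2$-dimensional $(H+V)/V=H/V$ and (iii) fails; no combinatorial check and no hypothesis $\operatorname{char}F\neq2$ will rescue that choice. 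You must take the second generator of $V$ outside $H$, whence $H+V=F^5$ and (iii) holds. With that fix, and with the observation your uniqueness step implicitly needs---after gauging $c_{t_1}=0$, if three of the remaining columns were dependent then $C\,\omega|_u=0$ would force the $\omega_t$ of the fourth to vanish, contradicting genericity---your normalization argument goes through.
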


We will call these automorphisms ``gauge automorphisms''.

\begin{proof}
First, the statement of the theorem is clear if our simplicial complex~$K$ consists of just one pentachoron: $K=\Delta^4$. Indeed, there are 10 triangle vectors corresponding to $\binom{5}{3}=10$ triangles, on which there are
\begin{equation}\label{v-face}
\underset{\substack{{\vphantom{x}}\\\mathrm{number}\\ \mathrm{of\; edges}}}{10} - \underset{\substack{{\vphantom{x}}\\\mathrm{number}\\ \mathrm{of\; vertices}}}{5} + \underset{\substack{{\vphantom{x}}\\\mathrm{one}\\ \mathrm{constant}}}{1} + \underset{\substack{{\vphantom{x}}\\\mathrm{restriction}\\ (\ref{vw})}}{1} = 7
\end{equation}
restrictions. The first three terms in the l.h.s.\ of~\eqref{v-face} are due to the fact that there are relations~\eqref{ve} on the edges, which have of course, in their turn, linear dependencies between them associated with \emph{vertices}, while these latter have, again, one linear dependence between them.

For an arbitrary~$K$, to prove the existence, we simply compose~$\mathsf e_s$ from 3-vectors on each~$\Delta^4$, that is, take a direct sum of these latter. The uniqueness up to gauge automorphisms follows from the fact that \eqref{ve} and~\eqref{vw} imply the same conditions for the restrictions of~$\mathsf e_s$ on every~$\Delta^4$ as if~$K$ consisted of only this~$\Delta^4$.
\end{proof}

\subsection{Triangle vectors and a simplicial 3-cocycle}

The cases interesting for us in this paper are when $K$ is, if not just one pentachoron, then either the boundary~$\partial\Delta^5$ of one six-vertex simplex~$\Delta^5$, or the common boundary of the l.h.s.\ and r.h.s.\ of the Pachner move 3--4. These cases are what we have primarily in mind when formulating our next theorems.

\begin{theorem}\label{th:b'}
Let complex~$K$ represent a triangulation of a compact connected 4-dimen\-sional PL manifold, possibly with boundary, and having a trivial second cohomology~$H^2(K,F)=0$. Vectors~$\mathsf e_s$, satisfying conditions $\mathrm{(i)}$--$\mathrm{(iii)}$ of Subsection~\ref{ss:vd} and taken to within gauge automorphisms, depend only on the coboundary $\omega=\partial \beta$ of cochain~$\beta$, that is, numbers
\begin{equation}\label{omega}
\omega _{ijkl} = \beta _{ijk}-\beta _{ijl}+\beta _{ikl}-\beta _{ijl}
\end{equation}
attached to tetrahedra~$t=ijkl$.
\end{theorem}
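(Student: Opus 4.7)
The plan is to show that when $\beta$ is replaced by $\beta' = \beta + \partial\gamma$ for any 1-cochain $\gamma$, the same set $\{\mathsf e_s\}$ continues to satisfy conditions (i)--(iii) of Subsection~\ref{ss:vd}, so that by the uniqueness part of Theorem~\ref{th:euv} applied to~$\beta'$, the triangle vectors for $\beta$ and $\beta'$ agree up to gauge automorphisms. This is enough to prove the theorem: the hypothesis $H^2(K,F)=0$ guarantees that any two 2-cochains $\beta$, $\beta'$ with the same coboundary $\omega$ satisfy $\beta' - \beta = \partial\gamma$ for some 1-cochain~$\gamma$, since $\beta' - \beta$ is then a 2-cocycle and hence, by hypothesis, a coboundary.

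Conditions (i) and (iii) do not mention $\beta$, so the only thing to verify is~\eqref{vw} with $\beta'$ in place of~$\beta$. Expanding,
\begin{equation*}
\sum_s \beta'_s \mathsf e_s \;=\; \sum_s \beta_s \mathsf e_s \;+\; \sum_s (\partial\gamma)_s \mathsf e_s,
\end{equation*}
the first term vanishes by~\eqref{vw} for $\beta$, and the second can be rearranged by interchanging the summations over triangles and their boundary edges:
\begin{equation*}
\sum_s (\partial\gamma)_s \mathsf e_s \;=\; \sum_s \sum_{b \subset s} \epsilon_{s,b}\, \gamma_b\, \mathsf e_s \;=\; \sum_b \gamma_b \sum_{s \supset b} \epsilon_{s,b}\, \mathsf e_s \;=\; 0,
\end{equation*}
where the last equality is exactly~\eqref{ve}.

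I expect no serious obstacle: the computation is a standard discrete integration by parts, and the signs $\epsilon_{s,b}$ in~\eqref{ve} match those in the coboundary $(\partial\gamma)_s$ by definition, so the extra term cancels automatically. The genuine input of the theorem is the cohomological hypothesis $H^2(K,F)=0$, which is used precisely to pass from ``$\beta$ and $\beta'$ have equal coboundaries'' to ``$\beta'-\beta$ is itself a coboundary''; everything else is bookkeeping of signs and an appeal to the already established Theorem~\ref{th:euv}.
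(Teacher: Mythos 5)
Your proof is correct and follows essentially the same route as the paper: both reduce the question to noting that $\beta'-\beta$ is a cocycle, hence by $H^2(K,F)=0$ a coboundary $\partial\gamma$, and that $\sum_s(\partial\gamma)_s\mathsf e_s$ is a linear combination of the left-hand sides of~\eqref{ve} and therefore vanishes. You merely spell out the ``integration by parts'' step and the appeal to the uniqueness in Theorem~\ref{th:euv} more explicitly than the paper does.
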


\begin{remark}
Cochain~$\omega$, being a 3-coboundary, is of course also a simplicial 3-cocycle, and we often prefer using this latter word. ``Cocycle'' and ``coboundary'' make no difference in this paper, where we consider just a heptagon relation which is ``local'' from the viewpoint of a topological quantum field theory (TQFT). But ``cocycle'' may become more relevant when we pass, in a future work, to constructing an actual TQFT.
\end{remark}

\begin{proof}
Indeed, if there is another 2-cochain~$\beta'$ such that $\partial \beta' = \partial \beta$, then $\beta'-\beta$ is a cocycle and hence a coboundary, and the difference
\[
\sum_{\mathrm{all}\;s} \beta'_s \mathsf e_s - \sum_{\mathrm{all}\;s} \beta _s \mathsf e_s
\]
of the corresponding left-hand sides of~\eqref{vw} can be represented as a linear combination of the left-hand sides of relations~\eqref{ve}. Which means that \eqref{ve} and~\eqref{vw} together give the same for $\beta$ and~$\beta'$.
\end{proof}

\subsection{Dimensions of linear spaces of V-colorings}

\begin{theorem}\label{th:vd}
Let $K$ be as in Theorem~\ref{th:b'}, and let $N_i$ denote the number of $i$-dimen\-sional simplices in it. Then, the dimension of the space of its V-colorings is
\begin{equation}\label{vM}
N_2-N_1+N_0-2.
\end{equation}
\end{theorem}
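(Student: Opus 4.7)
The plan is to count independent linear relations $\sum_s c_s \mathsf{e}_s = 0$ among the $N_2$ triangle vectors and subtract from~$N_2$. I identify the space $F^{N_2}$ of coefficient vectors with the simplicial 2-cochain space $C^2(K,F)$. Under this identification, each relation~\eqref{ve} at an edge~$b$ is the coboundary $\delta(b^*)\in C^2$, so the collection of all relations~\eqref{ve} spans $B^2=\mathrm{im}\bigl(\delta\colon C^1\to C^2\bigr)$. Using the initial segment of the cochain complex $C^0\xrightarrow{\delta}C^1\xrightarrow{\delta}C^2$, connectedness of~$K$ gives $\dim B^1=N_0-1$, and in the cases of interest ($K=\partial\Delta^5$ and the common boundary of Pachner move 3--4, both $S^4$) we have $H^1(K,F)=0$, hence $\dim Z^1=N_0-1$ and $\dim B^2=N_1-N_0+1$.

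Relation~\eqref{vw} contributes one further vector $\beta\in C^2$. Genericity of $\beta$ means $\omega=\delta\beta\neq 0$, so $\beta\notin Z^2$; the hypothesis $H^2(K,F)=0$ gives $Z^2=B^2$, so $\beta\notin B^2$, adding one new dimension. Thus the total space of relations generated by~(i) and~(ii) has dimension $N_1-N_0+2$, giving the upper bound $\dim(\text{V-colorings})\le N_2-N_1+N_0-2$.

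The matching lower bound---showing that \eqref{ve} and \eqref{vw} exhaust all relations among the $\mathsf{e}_s$---is the main obstacle. For this I invoke Theorem~\ref{th:euv} and the direct-sum construction from its proof: on each pentachoron~$u$, condition~(iii) makes the ten restrictions $\mathsf{e}_s|_u$ span $F^3$, so they satisfy precisely the seven independent local relations counted in~\eqref{v-face}, all of which descend from \eqref{ve} and \eqref{vw} restricted to~$u$. Any global relation $\sum_s c_s\mathsf{e}_s=0$ must restrict on each pentachoron to such a local combination, and reassembling these local decompositions via the same vertex- and edge-level dependency analysis used above lifts $c$ to an element of $B^2+F\cdot\beta$. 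Combining the two inequalities proves~\eqref{vM}.
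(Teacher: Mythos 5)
Your argument follows the paper's proof essentially step for step: you count the relations imposed by conditions (i)--(ii) as $N_1-N_0+2$ (justifying via the cochain complex $C^0\to C^1\to C^2$ what the paper simply calls ``clear'', and correctly noting the implicit use of $H^1(K,F)=0$), and then you rule out further relations by restricting a hypothetical relation to each pentachoron and globalizing via connectedness and $Z^2=B^2$. Your final ``reassembling'' sentence is exactly the paper's step ``$\partial\kappa$ is proportional to $\omega$ on each $u$, hence everywhere, hence $\kappa-\lambda\beta$ is a coboundary'', and would read more cleanly if phrased that way, but there is no gap.
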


\begin{remark}
This is of course in perfect agreement with~\eqref{v-face}: if we subtract the l.h.s.\ of~\eqref{v-face} from $N_2=10$, we obtain the relevant particular case of~\eqref{vM}.
\end{remark}

\begin{proof}
The situation here is as follows: we have $N_2$ triangle vectors and linear relations \eqref{ve} and~\eqref{vw} between them; of these latter there are, clearly, $N_1-N_0+2$ independent. The point is to show that there are no more independent relations.

Indeed, suppose there is a relation
\begin{equation}\label{g-hm}
\sum_{\mathrm{all}\;s} \kappa _s \mathsf e_s = 0.
\end{equation}
Projecting~\eqref{g-hm} on any chosen pentachoron~$u$ and reasoning like we did in the proof of Theorem~\ref{th:b'}, we find that the coboundary~$\partial\kappa$ must be proportional to $\omega$~\eqref{omega} on~$u$. But, as we remember, our complex~$K$ represents a connected 4-manifold, wherefrom we deduce at once that $\partial\kappa$ is proportional to $\omega$ everywhere, hence, \eqref{g-hm} brings about no new restrictions.
\end{proof}

As a direct application of Theorem~\ref{th:vd}, we obtain the following result for the dimensions of specific important spaces of V-colorings.

\begin{theorem}\label{th:3,9,18}
Conditions $\mathrm{(i)}$--$\mathrm{(iii)}$ of Subsection~\ref{ss:vd} give 
 \begin{itemize}
  \item a 3-dimensional space of V-colorings for a separate~$\Delta^4$,
  \item a 9-dimensional space of V-colorings for the boundary of one 5-simplex~$\Delta^5$,
  \item a 18-dimensional space of V-colorings for the common boundary of the l.h.s.\ and r.h.s.\ of the Pachner move 3--4.
 \end{itemize}
\end{theorem}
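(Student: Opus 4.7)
The plan is to apply Theorem~\ref{th:vd} three times, once to each of the three complexes listed, and to verify that its hypotheses hold in each case. So the work splits into two parts: first checking that each complex is a compact connected PL 4-manifold (possibly with boundary) with $H^2(K,F)=0$, and second counting $N_0$, $N_1$, $N_2$ so as to plug into the formula $N_2 - N_1 + N_0 - 2$.

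For the topological hypothesis, I would observe that a single $\Delta^4$ is a 4-ball, hence contractible, so $H^2=0$; that $\partial\Delta^5$ is a triangulated 4-sphere, with $H^2(S^4,F)=0$; and that the common boundary of the l.h.s.\ and r.h.s.\ of Pachner move 3--4 is the boundary of a cluster of five-simplices PL-homeomorphic to a 5-ball, hence again a 4-sphere with trivial~$H^2$.

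For the counts: a pentachoron has $N_0=5$, $N_1=\binom{5}{2}=10$, $N_2=\binom{5}{3}=10$, so the formula yields $10-10+5-2=3$. The boundary $\partial\Delta^5$ has $N_0=6$, $N_1=\binom{6}{2}=15$, $N_2=\binom{6}{3}=20$, giving $20-15+6-2=9$. For the common boundary of the Pachner move 3--4, the numbers $N_0=7$, $N_1=21$, $N_2=34$ have already been recorded in Subsection~\ref{ss:g}, and yield $34-21+7-2=18$.

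There is no real obstacle here: the theorem is essentially a corollary, and the only thing that requires a moment of thought is identifying each of the three complexes as a 4-manifold (or 4-manifold with boundary) with vanishing $H^2$, which follows from their being PL-homeomorphic to a 4-ball or a 4-sphere. I would therefore present the proof compactly as three applications of Theorem~\ref{th:vd}, using the arithmetic above.
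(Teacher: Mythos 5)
Your proposal is correct and matches the paper's own proof, which likewise just substitutes the counts $N_0,N_1,N_2$ for $\partial\Delta^5$ and for the common boundary of the Pachner move into the formula $N_2-N_1+N_0-2$ of Theorem~\ref{th:vd} (the paper dismisses the single-$\Delta^4$ case as already established via the count~\eqref{v-face}, while you re-derive it from the same formula). Your explicit verification of the topological hypotheses (4-ball, 4-sphere, vanishing $H^2$) is left implicit in the paper but is a welcome addition.
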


\begin{proof}
Nothing of course remains to be proven for a separate~$\Delta^4$; it was included here only for completeness.

For $\partial \Delta^5$: the numbers of triangles, edges and vertices are $N_2=20$, \ $N_1=15$, \ $N_0=6$, hence the dimension is $N_2-N_1+N_0-2 = 9$.

For the common boundary of the two sides of the Pachner move: $N_2=34$, \ $N_1=21$, \ $N_0=7$, hence $N_2-N_1+N_0-2 = 18$.
\end{proof}

\subsection{Explicit expressions}

Explicitly, triangle vectors can look as follows:
\begin{equation}\label{v12345}
  \begin{blockarray}{cccc}  
    \text{triangle }s & \BAmulticolumn{3}{c}{\text{components of\/ }\mathsf e_s \text{ on } u=12345}  \\[1ex]
    \begin{block}{c(ccc)}
123 & 0 & {\omega _{1345}} & {\omega _{1235}}-{\omega _{1234}} \quad \\
124 & 0 & 0 & -{\omega _{1235}}\\
125 & 0 & -{\omega _{1345}} & {\omega _{1234}}\\
134 & {\omega _{2345}} & -{\omega _{1235}} & {\omega _{1235}}\\
135 & -{\omega _{2345}} & {\omega _{1345}}+{\omega _{1235}} & -{\omega _{1234}}\\
145 & {\omega _{2345}} & -{\omega _{1235}} & 0\\
234 & -{\omega _{1345}} & 0 & -{\omega _{1235}}\\
235 & {\omega _{1345}} & -{\omega _{1345}} & {\omega _{1234}}\\
245 & -{\omega _{1345}} & 0 & 0\\
345 & { \quad \omega _{1345}}-{\omega _{2345}} & {\omega _{1235}} & 0 \\
    \end{block}
  \end{blockarray}
\end{equation}
Here, three components of~$\mathsf e_s|_{12345}$ are written out, corresponding to the fixed pentachoron $u=12345$ and all 10 triangles~$s\subset u$. For any other pentachoron $u=ijklm$, with $i<j<k<l<m$, one can just make in~\eqref{v12345} the substitution $1\mapsto i,\; \dots,\; 5\mapsto m$.

\section{Dual approach: triangle functionals and F-colorings}\label{s:f}

In Section~\ref{s:v}, we introduced ``V-colorings'', distinguished among all 3-component colorings of the 4-cells of a simplicial complex~$K$ by the fact that they belonged to the vector space spanned by ``triangle vectors''. Here, we introduce, dually, ``F-colorings'' that are distinguished by the fact that they belong to the nullspace of every ``triangle functional''.

\subsection{Definitions}\label{ss:fd}

Below, we consider simplicial complexes~$K$ representing a \emph{closed oriented} PL 4-manifold. The simplest example of these is the boundary~$\partial\Delta^5$ of one 5-cell, consisting, as we know, of six faces---pentachora~$\Delta^4$, often denoted here simply as~$u$. Recall that a ``coloring'' means, throughout this paper, that each 4-cell is assigned a ``color''---element of~$F^3$, where $F$ is a fixed field.

By definition, a \emph{triangle functional}~$\mathsf f_s$ is a linear form depending only on the colors on faces~$u\supset s$. Similarly to~\eqref{vo}, a triangle functional must change its sign together with the triangle orientation:
\begin{equation}\label{fo}
\mathsf f_{ijk} = -\mathsf f_{jik} = \mathsf f_{jki}\,, \quad \text{etc.}
\end{equation}

Given a set of triangle functionals for each triangle~$s$, we define \emph{F-colorings} of~$K$ as elements of the linear space on which all these functionals vanish.

Similarly to conditions $\mathrm{(i)}$--$\mathrm{(iii)}$ of Subsection~\ref{ss:vd}, we require then that our sets of triangle functionals have the following properties:
\begin{itemize}
 \item[$\mathrm{(i)}$] \emph{vanishing boundary}: for any given oriented edge~$b$, 
  \begin{equation}\label{fe}
   \sum_{s\supset b} \epsilon _{s,b} \mathsf f_s =0,
  \end{equation}
  where $\epsilon _{s,b}$ is the sign with which $b$ enters in the boundary~$\partial s$ of~$s$,
 \item[$\mathrm{(ii)}$] \emph{orthogonality to a given 2-cochain}:
  \begin{equation}\label{fw}
   \sum_{\mathrm{all\;}s} \gamma _s \mathsf f_s =0,
  \end{equation}
   where $\{\gamma _s\}$ are generic coefficients for all triangles~$s$, forming together a simplicial 2-\emph{cochain}~$\gamma$,
 \item[$\mathrm{(iii)}$] \emph{nondegeneracy}: for any pentachoron~$u$, the restrictions~$\mathsf f_s|_u$ of triangle functionals on~$u$ span the whole dual space to the color space~$F^3$.
\end{itemize}

\subsection{Existence and uniqueness up to gauge automorphisms}

\begin{theorem}\label{th:euf}
For a given~$K$ representing a closed oriented PL 4-manifold, there exists a set of triangle functionals~$\mathsf f_s$ satisfying the above conditions $\mathrm{(i)}$--$\mathrm{(iii)}$. Moreover, all such~$\mathsf f_s$ are determined uniquely up to gauge automorphisms of the color spaces on each separate 4-face.
\end{theorem}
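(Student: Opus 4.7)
The proof parallels that of Theorem~\ref{th:euv}, with the color space $F^3$ replaced by its dual $(F^3)^*$ and the role of ``spanned by'' replaced by that of ``common kernel of''. The plan is first to handle a single pentachoron, and then to reduce the general case to it by a direct-sum/restriction argument.

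For $K=\Delta^4$, there are $10$ triangle functionals $\mathsf f_s$, each a linear form on the $3$-dimensional color space $F^3$. The same parameter count as in~\eqref{v-face} applies verbatim: the $10$ edge relations~\eqref{fe} carry $5$ vertex dependencies (from $\partial^2=0$), which in turn carry one global dependence, and the orthogonality~\eqref{fw} contributes one further independent restriction, giving $10-5+1+1=7$ net restrictions. Combined with nondegeneracy~(iii), this pins down $\{\mathsf f_s\}$ up to a common automorphism of $(F^3)^*$, i.e.\ a gauge automorphism of the color space of that pentachoron.

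For a general closed oriented PL $4$-manifold $K$, existence follows by direct sum: on each pentachoron $u$ pick local functionals $\mathsf f_s^{(u)}$ as in the single-pentachoron case, and define $\mathsf f_s := \sum_{u\supset s} \mathsf f_s^{(u)}$, regarded as a functional on the total color space $\bigoplus_u F^3$. The global edge relations~\eqref{fe} and the global orthogonality~\eqref{fw} then follow by swapping the order of summation and invoking the corresponding local conditions on each $u$. For uniqueness, decompose any global $\mathsf f_s$ satisfying (i)--(iii) as $\mathsf f_s = \sum_{u\supset s} \mathsf f_s|_u$; since colors on distinct pentachora are independent coordinates of $\bigoplus_u F^3$, each global equation~\eqref{fe} or~\eqref{fw} splits into independent equations on each $u$, to which the single-pentachoron result applies.

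The main subtlety is verifying this local-to-global decomposition of the defining relations cleanly, which is where the assumption that $K$ is a closed oriented PL manifold enters: it ensures that the boundary signs $\epsilon_{s,b}$ and the enumeration of pentachora around each triangle and each edge are intrinsic and consistent. Beyond this combinatorial cleanliness, the argument should require no further topological input, and in particular nothing analogous to the cohomological condition $H^2(K,F)=0$ imposed in Theorem~\ref{th:b'}.
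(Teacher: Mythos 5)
Your proof is correct and follows essentially the same route as the paper, which simply dualizes the proof of Theorem~\ref{th:euv} (vectors become functionals, and one works with the restrictions of the ten triangle functionals to each pentachoron); your single-pentachoron count and the direct-sum/restriction argument are exactly what the paper intends. The only quibble is your closing paragraph: the local-to-global splitting of \eqref{fe} and \eqref{fw} is automatic because $\mathsf f_s|_u=0$ whenever $s\not\subset u$, and does not really rest on $K$ being closed and oriented, but this remark does not affect the validity of your argument.
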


\begin{proof}
We just repeat the proof of Theorem~\ref{th:euv}, with the two simple changes:
\begin{itemize}
 \item vector spaces are changed to their duals, that is, vectors to functionals,
 \item for the case of just one~$\Delta^4$, where we spoke about the 10 triangle vectors, we now speak of \emph{restrictions} of the 10 triangle functionals on the space of colors of that~$\Delta^4$. 
\end{itemize}
\end{proof}

\subsection{Triangle functionals and a simplicial 3-cocycle}

\begin{theorem}\label{th:g'}
Let complex~$K$ represent a triangulation of a closed oriented 4-dimen\-sional PL manifold having a trivial second cohomology~$H^2(K,F)=0$. Functionals~$\mathsf e_s$, satisfying conditions $\mathrm{(i)}$--$\mathrm{(iii)}$ of Subsection~\ref{ss:fd} and taken to within gauge automorphisms, depend only on the coboundary $\partial \gamma$ of cochain~$\gamma$.
\end{theorem}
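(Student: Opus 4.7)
The plan is to mimic the proof of Theorem~\ref{th:b'}, transposing it from the vector/span setting to the functional/nullspace setting. Concretely, I would show that two 2-cochains $\gamma$ and $\gamma'$ with $\partial\gamma=\partial\gamma'$ impose equivalent systems of linear conditions (i)--(iii) on the $\mathsf f_s$, so that by the uniqueness part of Theorem~\ref{th:euf} the resulting functionals coincide up to gauge automorphisms. This turns the theorem into a purely algebraic identity between the two instances of~\eqref{fw}, modulo~\eqref{fe}.

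First I would set $\delta=\gamma'-\gamma$; this is a 2-cocycle because $\partial\delta=0$. The hypothesis $H^2(K,F)=0$ then provides a 1-cochain~$\alpha$ with $\delta=\partial\alpha$, i.e.\ $\delta_{ijk}=\alpha_{jk}-\alpha_{ik}+\alpha_{ij}$. Substituting this into the difference of the two versions of~\eqref{fw} and reorganizing the double sum by collecting the terms corresponding to each oriented edge~$b$, I obtain
\[
\sum_{\mathrm{all}\;s}\gamma'_s\mathsf f_s-\sum_{\mathrm{all}\;s}\gamma_s\mathsf f_s
=\sum_{\mathrm{all}\;s}(\partial\alpha)_s\mathsf f_s
=\sum_{\mathrm{all}\;b}\alpha_b\sum_{s\supset b}\epsilon_{s,b}\mathsf f_s,
\]
which vanishes by condition~(i), i.e.\ by~\eqref{fe}. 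Hence the orthogonality relation~\eqref{fw} for $\gamma'$ is a consequence of the one for~$\gamma$ together with~\eqref{fe}, and by symmetry the two systems are equivalent. Conditions~(i) and~(iii) do not involve the cochain at all.

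Therefore the space of F-colorings cut out by (i)--(iii) depends only on~$\partial\gamma$, and by Theorem~\ref{th:euf} so do the functionals~$\mathsf f_s$ up to gauge. The only thing to watch is the bookkeeping of signs in $(\partial\alpha)_s$ versus the edge-incidence coefficients $\epsilon_{s,b}$, but these are exactly the signs that make \eqref{fe} the boundary relation for triangle-indexed functionals, so the cancellation is automatic. I do not foresee a genuine obstacle: the closed-oriented hypothesis enters only through Theorem~\ref{th:euf} (for existence and the gauge-uniqueness statement), not through the present cohomological argument, which needs only linearity, condition~(i), and $H^2(K,F)=0$.
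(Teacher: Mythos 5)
Your argument is exactly the paper's: the paper proves this by repeating the proof of Theorem~\ref{th:b'}, i.e.\ writing $\gamma'-\gamma$ as a coboundary $\partial\alpha$ via $H^2(K,F)=0$ and observing that the difference of the two instances of~\eqref{fw} is then a linear combination of the relations~\eqref{fe}, so the conditions (i)--(iii) cut out the same object up to gauge. Your version just spells out the edge-by-edge regrouping that the paper leaves implicit; it is correct and essentially identical.
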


\begin{proof}
Just repeat the proof of Theorem~\ref{th:b'} with obvious simple changes.
\end{proof}

\subsection{Dimensions of linear spaces of F-colorings}

\begin{theorem}\label{th:fd}
Let $K$ be as in Theorem~\ref{th:g'}, and recall that $N_i$ denotes the number of $i$-dimen\-sional simplices in it. Then, the dimension of the space of its F-colorings is
\begin{equation}\label{fM}
3N_4 - (N_2-N_1+N_0-2).
\end{equation}
\end{theorem}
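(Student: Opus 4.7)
The plan is to compute the dimension of the F-colorings as the codimension of the span of the triangle functionals inside the full space of 3-component colorings of the $N_4$ pentachora, which has total dimension $3N_4$. By definition an F-coloring is an element of the common nullspace of all $\mathsf f_s$, so
\[
\dim\{\text{F-colorings}\} \;=\; 3N_4 \;-\; \dim \operatorname{span}\{\mathsf f_s\}_{s\in K}.
\]
The whole task is therefore to show
\[
\dim \operatorname{span}\{\mathsf f_s\} \;=\; N_2-(N_1-N_0+2),
\]
i.e.\ that the only linear dependencies among the $N_2$ triangle functionals are the ones forced by \eqref{fe} and \eqref{fw}.

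First I would count the \emph{manifest} relations. These are of two kinds, exactly as in the proof of Theorem~\ref{th:vd}: the $N_1$ edge relations~\eqref{fe}, and the single global relation~\eqref{fw}. The edge relations~\eqref{fe} are not independent---for each vertex~$v$ the sum of the edge relations over edges incident to~$v$ (with appropriate signs) vanishes identically, giving $N_0$ vertex dependencies, and these $N_0$ dependencies themselves satisfy one further relation since $K$ is connected. This leaves $N_1-(N_0-1)=N_1-N_0+1$ independent edge relations, plus the one extra relation~\eqref{fw}, for a total of $N_1-N_0+2$ manifest independent relations. This is the same counting that appeared in~\eqref{v-face} and in Theorem~\ref{th:vd}.

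Next I would rule out any further relation. Suppose
\[
\sum_{\text{all }s} \kappa_s \mathsf f_s \;=\; 0.
\]
Restricting to a single pentachoron $u$ and arguing exactly as in the proof of Theorem~\ref{th:b'} (now in its dualized form, Theorem~\ref{th:g'}), the cochain $\kappa|_u$ must have coboundary proportional to $\gamma|_u$ on~$u$. Since $K$ is connected, the proportionality constant is global; hence $\partial\kappa$ is proportional to $\partial\gamma$ on all of~$K$, which means $\kappa$ differs from a multiple of $\gamma$ by a cocycle, and (using $H^2(K,F)=0$) by a coboundary. Coboundaries yield precisely the edge relations~\eqref{fe}, and $\gamma$ yields~\eqref{fw}, so $\kappa$ brings nothing new.

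Combining, $\dim \operatorname{span}\{\mathsf f_s\} = N_2-N_1+N_0-2$, and substituting into the first display gives exactly~\eqref{fM}. The only delicate point is the argument that no hidden relations exist; but it is a direct transcription of the corresponding step in the proof of Theorem~\ref{th:vd}, so no real new work is required.
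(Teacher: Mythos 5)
Your proof is correct and follows essentially the same route as the paper, whose own proof simply invokes ``a reasoning dual to the proof of Theorem~\ref{th:vd}'' to conclude that the triangle functionals impose $N_2-N_1+N_0-2$ independent linear restrictions and then subtracts this from the $3N_4$ color components; you have merely written out that dual reasoning (the manifest relations \eqref{fe}, \eqref{fw} with their vertex dependencies, plus the pentachoron-by-pentachoron exclusion of hidden relations) in full. One small slip: restricting a putative relation to a pentachoron~$u$ forces $\partial(\kappa|_u)$ to be proportional to $\partial\gamma|_u=\omega|_u$, not to $\gamma|_u$, but your next sentence already uses the correct statement, so nothing is affected.
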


\begin{proof}
A reasoning dual to the proof of Theorem~\ref{th:vd} shows that the total number of linearly independent linear \emph{restrictions} on colors is $N_2-N_1+N_0-2$. As the total number of color components on which these restrictions are imposed is~$3N_4$, we get~\eqref{fM}.
\end{proof}

Hence, we obtain the following dimensions of specific important spaces of F-colorings. Note that they are the same as in Theorem~\ref{th:3,9,18}!

\begin{theorem}\label{th:9,18}
There are:
 \begin{itemize}
  \item a 9-dimensional space of V-colorings for the boundary of one 5-simplex~$\Delta^5$,
  \item a 18-dimensional space of V-colorings for the common boundary of the l.h.s.\ and r.h.s.\ of the Pachner move 3--4.
 \end{itemize}
    \qed
\end{theorem}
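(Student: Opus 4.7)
The plan is to apply Theorem~\ref{th:fd} directly. First I would verify that both complexes in question meet its hypothesis: $\partial\Delta^5$ is patently the standard PL 4-sphere~$S^4$, and the common boundary of the l.h.s.\ and r.h.s.\ of Pachner move 3--4 is likewise a PL 4-sphere (it bounds either of the two 5-ball clusters that the move interchanges). In both cases $H^2(K,F)=0$, so the hypothesis of Theorem~\ref{th:g'}, and hence of Theorem~\ref{th:fd}, is satisfied, and the complexes are closed and oriented as required in Subsection~\ref{ss:fd}.

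With that verification in hand, the proof reduces to substitution in formula~\eqref{fM}. For $\partial\Delta^5$ I would use $N_4=6$ together with $N_2=20$, $N_1=15$, $N_0=6$, already tabulated in the proof of Theorem~\ref{th:3,9,18}, to compute
\[
3\cdot 6 - (20-15+6-2) \;=\; 18 - 9 \;=\; 9.
\]
For the common boundary of the two sides of Pachner move 3--4, with $N_4=12$ and the numbers $N_2=34$, $N_1=21$, $N_0=7$ already recorded in Subsection~\ref{ss:g}, I would compute
\[
3\cdot 12 - (34-21+7-2) \;=\; 36 - 18 \;=\; 18.
\]

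There is essentially no obstacle once Theorem~\ref{th:fd} is available; the substantive content of the statement is the numerical coincidence with Theorem~\ref{th:3,9,18}. Combining~\eqref{vM} with~\eqref{fM}, the equality of the two dimensions for each complex is simply the identity $3N_4 = 2(N_2-N_1+N_0-2)$, which one reads off from the two substitutions above. This can be interpreted as a manifestation of Poincar\'e duality on these 4-sphere triangulations, but the interpretation plays no role in the proof itself, which is a one-line appeal to Theorem~\ref{th:fd}.
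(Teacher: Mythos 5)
Your proof is correct and is exactly the argument the paper intends: the theorem is stated with an immediate \qed because it follows from substituting the relevant values of $N_4$, $N_2$, $N_1$, $N_0$ into formula~\eqref{fM} of Theorem~\ref{th:fd}, precisely as you do (note that the word ``V-colorings'' in the statement is evidently a slip for ``F-colorings'', which is how you rightly read it). Your extra check that both complexes are PL 4-spheres with $H^2=0$ is a sensible verification the paper leaves implicit.
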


\subsection{Explicit expressions and relation to V-colorings}

Here we present the explicit form for triangle functionals obeying conditions $\mathrm{(i)}$--$\mathrm{(iii)}$ of Subsection~\ref{ss:fd}---not just one of many possible forms, but such that satisfies the remarkable Theorem~\ref{th:vf} given here below.

We will use the same letter~$\omega$ for the coboundary $\omega=\partial\gamma$ of cochain~$\gamma$~\eqref{fw} as we used for~ $\partial\beta$ in \eqref{omega} and~\eqref{v12345}. This will be justified by the mentioned Theorem~\ref{th:vf}.

So, let $K$ be a triangulation of a closed oriented 4-dimen\-sional PL manifold. First, if it contains 4-simplex~$\Delta^4=12345$ and, moreover, the orientation of this~$\Delta^4$ in~$K$ coincides with its orientation induced by the order of its vertices, then the components of triangle functionals on this~$\Delta^4$ can be chosen as follows:
\begin{equation}\label{f12345}
  \begin{blockarray}{cccc}  
    \text{triangle }s & \BAmulticolumn{3}{c}{\text{components of\/ }\mathsf f_s \text{ on } u=12345}  \\[1ex]
    \begin{block}{c(ccc)}
123 & 0 & 0 & \dfrac{1}{{\omega _{1234}}\, {\omega _{1235}}}\\[2ex]
124 & \dfrac{1}{{\omega _{1245}}\, {\omega _{1345}}} & -\dfrac{{\omega _{1345}}-{\omega _{2345}}}{{\omega _{1235}}\, {\omega _{1245}}\, {\omega _{1345}}} & -\dfrac{{\omega _{1245}}+{\omega _{1234}}}{{\omega _{1234}}\, {\omega _{1235}}\, {\omega _{1245}}}\\[2ex]
125 & -\dfrac{1}{{\omega _{1245}}\, {\omega _{1345}}} & \dfrac{{\omega _{1345}}-{\omega _{2345}}}{{\omega _{1235}}\, {\omega _{1245}}\, {\omega _{1345}}} & \dfrac{1}{{\omega _{1235}}\, {\omega _{1245}}}\\[2ex]
134 & 0 & \dfrac{1}{{\omega _{1235}}\, {\omega _{1345}}} & \dfrac{1}{{\omega _{1234}}\, {\omega _{1235}}}\\[2ex]
135 & 0 & -\dfrac{1}{{\omega _{1235}}\, {\omega _{1345}}} & 0\\[2ex]
145 & \dfrac{1}{{\omega _{1245}}\, {\omega _{1345}}} & \dfrac{{\omega _{1235}}-{\omega _{1234}}}{{\omega _{1235}}\, {\omega _{1245}}\, {\omega _{1345}}} & -\dfrac{1}{{\omega _{1235}}\, {\omega _{1245}}}\\[2ex]
234 & -\dfrac{1}{{\omega _{1345}}\, {\omega _{2345}}} & -\dfrac{1}{{\omega _{1235}}\, {\omega _{1345}}} & -\dfrac{1}{{\omega _{1234}}\, {\omega _{1235}}}\\[2ex]
235 & \dfrac{1}{{\omega _{1345}}\, {\omega _{2345}}} & \dfrac{1}{{\omega _{1235}}\, {\omega _{1345}}} & 0\\[2ex]
245 & -\dfrac{{\omega _{2345}}+{\omega _{1245}}}{{\omega _{1245}}\, {\omega _{1345}}\, {\omega _{2345}}} & -\dfrac{{\omega _{1235}}-{\omega _{1234}}}{{\omega _{1235}}\, {\omega _{1245}}\, {\omega _{1345}}} & \dfrac{1}{{\omega _{1235}}\, {\omega _{1245}}}\\[2ex]
345 & \dfrac{1}{{\omega _{1345}}\, {\omega _{2345}}} & 0 & 0 \\[2ex]
    \end{block}
  \end{blockarray}
\end{equation}

\begin{remark}
We write thus both vectors in~\eqref{v12345} and covectors (functionals) in~\eqref{f12345} as \emph{rows}; we do it for the obvious typographical reasons.
\end{remark}

If $K$ contains a 4-simplex~$\Delta^4=ijklm$, with $i<j<k<l<m$, and the orientation of this~$\Delta^4$ in~$K$ coincides with its orientation induced by the order of its vertices, then substitution $1\mapsto i,\, \ldots,\, 5\mapsto m$ must be done in~\eqref{f12345}.

If, finally, the two mentioned orientations do \emph{not} coincide, the whole matrix in~\eqref{f12345} must be, additionally, multiplied by~$(-1)$.

\begin{theorem}\label{th:vf}
For the boundary~$K=\partial\Delta^5$ of one 5-simplex, the set of F-colorings defined using triangle functionals~\eqref{f12345} with such cochain~$\gamma$~\eqref{fw} that
\[
\partial \gamma = \omega
\] 
is the \emph{same} as the set of V-colorings defined using triangle vectors~\eqref{v12345}.
\end{theorem}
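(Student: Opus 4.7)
The plan is to establish the inclusion $V \subseteq F$; equality will then follow because Theorems~\ref{th:3,9,18} and~\ref{th:9,18} give $\dim V = \dim F = 9$ inside the $18$-dimensional ambient coloring space of $\partial\Delta^5$. Since $V$ is spanned by the triangle vectors $\mathsf e_{s'}$, the inclusion is equivalent to the family of scalar identities $\mathsf f_s(\mathsf e_{s'}) = 0$, one for every ordered pair of triangles $s, s'$ in $\partial\Delta^5$.

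Each such pairing is local: $\mathsf f_s$ depends only on pentachora containing $s$, while $\mathsf e_{s'}$ is supported only on pentachora containing $s'$, so
\[
\mathsf f_s(\mathsf e_{s'}) \;=\; \sum_{u \supset s \cup s'} \mathsf f_s|_u\bigl(\mathsf e_{s'}|_u\bigr),
\]
with the sign from the induced orientation of $u$ in $\partial\Delta^5$ already absorbed into the sign rule of~\eqref{f12345}. The $S_6$ action permuting the six vertices of $\Delta^5$ preserves both the V- and F-structures and reduces the $20^2 = 400$ pairs to four orbits indexed by $|s \cap s'| \in \{0,1,2,3\}$, in which the sum ranges over $0$, $1$, $2$, or $3$ pentachora respectively. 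The orbit $|s \cap s'| = 0$ is vacuous: $s \cup s'$ already contains all six vertices, so no pentachoron of $\partial\Delta^5$ contains it.

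For each of the remaining three orbits I would pick a convenient representative — for instance $(123,145)$, $(123,124)$, and $(123,123)$ — read off the relevant rows of \eqref{v12345} and \eqref{f12345} (with the substitution $1\mapsto i,\ldots,5\mapsto m$ for each contributing pentachoron), and verify that the resulting polynomial identity in the $\omega_{ijkl}$ holds. The case $|s \cap s'| = 1$ is a one-term vanishing on a single pentachoron; the case $|s \cap s'| = 2$ is a two-term cancellation between the two pentachora sharing the tetrahedron $s \cup s'$; and the case $s = s'$ is a three-term identity.

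The main obstacle I anticipate is the case $s = s'$. The three pentachora through $s$ each carry a different relabeling of \eqref{v12345} and \eqref{f12345}, each has its own induced-orientation sign in $\partial\Delta^5$, and the same geometric triangle $s$ appears with different oriented labels inside each pentachoron, introducing further signs via \eqref{vo}--\eqref{fo}. Once this bookkeeping is organized, I expect the three-term cancellation to be forced by the cocycle relation $\partial\omega = 0$ on the tetrahedra common to the three pentachora, at which point the full identity, and hence the theorem, follows.
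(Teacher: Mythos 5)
Your proposal is correct and matches the paper's proof in substance: the paper's entire proof is the phrase ``Direct calculation,'' and what you describe --- reducing $V=F$ to $V\subseteq F$ via the dimension count $\dim V=\dim F=9$, and then to the vanishing of all pairings $\mathsf f_s(\mathsf e_{s'})$ organized by $|s\cap s'|$, with the cocycle condition $\partial\omega=0$ entering in the multi-term cases --- is a sound organization of exactly that calculation. The one point to watch is the $S_6$ reduction to a single representative per orbit: the tables \eqref{v12345} and \eqref{f12345} are written in an increasing-vertex-order convention, so equivariance holds only up to gauge automorphisms and signs, which do not obviously preserve the vanishing of a two- or three-term \emph{sum}; either justify this carefully or fall back on checking all pairs, which is still a finite direct calculation.
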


\begin{proof}
Direct calculation.
\end{proof}

\begin{remark}
Of course it would be interesting to find a more conceptual proof.
\end{remark}

\section{How heptagon follows from V- and F-colorings}\label{s:h}

\subsection{Five-dimensional manifolds with boundary and permitted colorings}

We call a coloring of the boundary of one 5-simplex \emph{permitted} if it is a V-, or, equivalently due to Theorem~\ref{th:vf}, F-coloring, constructed using a given generic 3-cocycle~$\omega$. We can also omit the word ``boundary'' and speak just of a permitted coloring of a 5-simplex. A \emph{cluster} of 5-simplices means below, generally, a triangulated 5-manifold with boundary, for instance, either l.h.s.\ or r.h.s.\ of Pachner move 3--4. By definition, a permitted coloring~$\mathfrak c$ of such a cluster~$M$---built again from a given simplicial 3-cocycle~$\omega$---is a coloring of all 4-faces whose restriction onto any 5-simplex is permitted, and a permitted coloring of the boundary~$\partial M$ is the restriction of~$\mathfrak c$ onto~$\partial M$.

\subsection{Inclusions for V-, permitted, and F-colorings}

\begin{theorem}\label{th:incl}
For the boundary~$\partial M$ of a cluster~$M$ of 5-simplices, denote the linear spaces of V-colorings, permitted colorings, and F-colorings, respectively, as $L_V$, $L_{\mathrm{perm}}$, and~$L_F$. Then, the following inclusions hold:
\begin{equation}\label{incl}
L_V \subseteq L_{\mathrm{perm}} \subseteq L_F
\end{equation}
\end{theorem}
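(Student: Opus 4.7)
The plan is to prove the two inclusions in~\eqref{incl} independently, in each case exploiting that the explicit pentachoron-local formulas~\eqref{v12345} and~\eqref{f12345} for triangle vectors and triangle functionals make sense equally well on $\partial M$, on a single 5-simplex boundary $\partial v$, or on the whole cluster~$M$.

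For $L_V \subseteq L_{\mathrm{perm}}$, take a V-coloring $\mathfrak{c} = \sum_s c_s\,\mathsf{e}_s^{\partial M}$ with the sum running over triangles in $\partial M$. Using the same formula~\eqref{v12345} to define $\mathsf{e}_s^M$ on every pentachoron of $M$, set $\tilde{\mathfrak{c}} := \sum_s c_s\,\mathsf{e}_s^M$. Since an interior triangle of $M$ cannot lie in a boundary pentachoron, only boundary triangles contribute on $\partial M$, and one verifies $\tilde{\mathfrak{c}}|_{\partial M} = \mathfrak{c}$. For any 5-simplex $v \subset M$, the restriction $\tilde{\mathfrak{c}}|_{\partial v} = \sum_{s \subset v} c_s\,\mathsf{e}_s^v$ is a V-coloring of $\partial v$ by construction, hence permitted. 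The decomposition of $\mathfrak{c}$ into triangle vectors is non-unique, but any choice yields a legitimate extension.

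For $L_{\mathrm{perm}} \subseteq L_F$, let $\mathfrak{c} = \tilde{\mathfrak{c}}|_{\partial M}$ for some coloring $\tilde{\mathfrak{c}}$ of $M$ whose restriction to every 5-simplex is permitted. For each triangle $s \in \partial M$, I would establish the functional identity
\[
\mathsf{f}_s^{\partial M}\;=\;\sum_{v \supset s}\mathsf{f}_s^v,
\]
both sides viewed as functionals on colorings of the 4-faces of $M$, each $\mathsf{f}_s^v$ extended by zero outside the pentachora of $v$. The verification is pentachoron by pentachoron. For an interior pentachoron $u$ shared by two 5-simplices $v_1, v_2$, the induced orientations of $u$ from $v_1$ and from $v_2$ are opposite (the consistent-orientation property of the oriented manifold $M$), so by the sign convention stated right after~\eqref{f12345} the components $\mathsf{f}_s^{v_1}|_u$ and $\mathsf{f}_s^{v_2}|_u$ cancel. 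For a boundary pentachoron $u \in v$, the orientation induced from $v$ matches the one induced on $\partial M$ from $M$, so $\mathsf{f}_s^v|_u = \mathsf{f}_s^{\partial M}|_u$. Applying both sides of the identity to $\tilde{\mathfrak{c}}$ gives $\mathsf{f}_s^{\partial M}(\mathfrak{c}) = \sum_v \mathsf{f}_s^v(\tilde{\mathfrak{c}}|_{\partial v})$, and each term on the right vanishes because $\tilde{\mathfrak{c}}|_{\partial v}$ is permitted, hence an F-coloring of $\partial v$ by Theorem~\ref{th:vf}.

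The main obstacle is the orientation bookkeeping in the second inclusion: the $-1$ sign convention stated after~\eqref{f12345}, the boundary sign $(-1)^{k}$ on each 5-simplex, and the sign relating the vertex-order orientation of each $v$ to its orientation in $M$ must all be combined so that the two contributions at every interior pentachoron cancel. The rest is routine linearity.
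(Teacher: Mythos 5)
Your proposal is correct and follows essentially the same route as the paper: extend each boundary triangle vector into $M$ via the same pentachoron-local data to get the first inclusion, and for the second use the identity $\mathsf f_s^{\partial M}=\sum_{v\supset s}\mathsf f_s^{(v)}$, with the interior-pentachoron contributions cancelling by the opposite induced orientations. The paper's proof is just a terser version of the same two arguments.
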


\begin{proof}
To prove the first inclusion in~\eqref{incl} means to prove that any V-coloring of~$\partial M$ is permitted. Indeed, a V-coloring of~$\partial M$ is a linear combination of triangle vectors~$\mathsf v_s$ for triangles $s\subset \partial M$. Each~$\mathsf v_s$ can be extended from~$\partial M$ to~$M$ by adding the components belonging to inner 4-faces. Extending our triangle vectors this way, we get a V-coloring of each 5-simplex in~$M$ and hence a permitted coloring of $M$ and~$\partial M$.

To prove the second inclusion in~\eqref{incl} means to prove that any permitted coloring of~$\partial M$ is an F-coloring. Here we use the definition of a permitted coloring of~$\Delta^5$ in terms of F-colorings. Consider, for a triangle~$s\subset \partial M$, its \emph{star} in~$M$---the union of all 5-simplices~$v$ containing~$s$. For each~$v$, there is the triangle functional~$\mathsf f_s^{(v)}$, and their sum $\sum_v \mathsf f_s^{(v)} = \mathsf f_s^{(\partial M)}$ is nothing but the triangle functional for~$s$ in~$\partial M$, because all terms belonging to \emph{inner} 4-faces cancel out due to their different orientations with respect to the two adjacent 5-simplices, see the paragraphs between \eqref{f12345} and Theorem~\ref{th:vf}.
\end{proof}

\subsection{Heptagon}

Let now $M$ be either the l.h.s.\ or r.h.s.\ of Pachner move 3--4; the boundary~$\partial M$ is, as we know, the same. And its permitted colorings are also the same, because, combining~\eqref{incl} with the fact that
\[
\dim L_V = \dim L_F = 18
\]
due to Theorems \ref{th:3,9,18} and~\ref{th:9,18}, we see that simply
\[
L_V = L_{\mathrm{perm}} = L_F
\]
in this case. We have thus arrived at the following theorem.

\begin{theorem}\label{th:h}
Heptagon relation holds for permitted colorings of a 5-simplex defined as either V-colorings according to Section~\ref{s:v} or F-colorings according to Section~\ref{s:f}.
\qed
\end{theorem}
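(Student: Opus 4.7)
The plan is to apply Theorem~\ref{th:incl} to each side of the Pachner move 3--4 separately and then use a dimension-squeeze based on the numerical coincidence $\dim L_V = \dim L_F = 18$ furnished by Theorems~\ref{th:3,9,18} and~\ref{th:9,18}.

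First I would set $M_L$ to be the l.h.s.\ cluster (simplices 123456, 123457, 123467) and $M_R$ the r.h.s.\ cluster (simplices 123567, 124567, 134567, 234567), and recall from Subsection~\ref{ss:g} that $\partial M_L = \partial M_R$ is a closed oriented 4-manifold---in fact PL-homeomorphic to~$S^4$, being the boundary of the common 5-ball carrying both triangulations---with $(N_0,N_1,N_2,N_4)=(7,21,34,12)$. In particular $H^2(\partial M_L, F)=0$, so the hypotheses of Theorems~\ref{th:b'} and~\ref{th:g'} are satisfied and the spaces $L_V$ and $L_F$ on this common boundary are parameterized entirely by the 3-cocycle $\omega$ on the 7-vertex configuration.

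Next I would apply Theorem~\ref{th:incl} twice, once to~$M_L$ and once to~$M_R$, obtaining
\[
L_V \;\subseteq\; L_{\mathrm{perm}}(M_L) \;\subseteq\; L_F \quad\text{and}\quad L_V \;\subseteq\; L_{\mathrm{perm}}(M_R) \;\subseteq\; L_F,
\]
where in both chains the outer terms $L_V$ and $L_F$ refer to the \emph{same} spaces of colorings of the common boundary. Since Theorems~\ref{th:3,9,18} and~\ref{th:9,18} give $\dim L_V = \dim L_F = 18$, both chains collapse to equalities, whence
\[
L_{\mathrm{perm}}(M_L) \;=\; L_V \;=\; L_F \;=\; L_{\mathrm{perm}}(M_R),
\]
which is exactly the heptagon relation claimed.

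The argument is essentially a squeeze, and the genuine content has already been unloaded into the earlier sections, namely into matching the dimensions of $L_V$ and $L_F$ via the combinatorial formulas~\eqref{vM} and~\eqref{fM} evaluated on the same boundary. The one bookkeeping point that needs explicit verification is that the spaces $L_V$ (resp.~$L_F$) attached to~$M_L$ and to~$M_R$ really do coincide as subspaces of $F^{3 N_4}$; this is true because V- and F-colorings of~$\partial M$ depend only on~$\partial M$ together with the restriction of~$\omega$ to it, and both data are identical for the two sides of the move. This is the closest thing to an obstacle in the proof, and it is dispatched by the above observation.
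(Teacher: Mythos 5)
Your proposal is correct and follows essentially the same route as the paper: the author also sandwiches $L_{\mathrm{perm}}$ between $L_V$ and $L_F$ via Theorem~\ref{th:incl} and collapses the chain using $\dim L_V=\dim L_F=18$ from Theorems \ref{th:3,9,18} and~\ref{th:9,18}. Your extra remark that $L_V$ and $L_F$ depend only on the common boundary and the restriction of~$\omega$ to it is a sensible explicit check that the paper leaves implicit.
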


\end{document}